\setlist[enumerate]{itemsep=0mm}
\newcommand{\R}{\mathbb{R}}
\newcommand{\cov}{\text{cov}}
\newcommand{\N}{\mathbb{N}}
\newcommand{\HS}{\mathcal{H}}
\newcommand{\Cclass}{\mathcal{C}(\HS)}
\newcommand{\CCclass}{\mathcal{CC}(\HS)}
\newcommand{\X}{\mathbb{X}}
\title{General reproducing properties in RKHS with application to derivative and integral operators}
\author{
    Fatima-Zahrae El-Boukkouri\thanks{Institut de Mathématiques de Toulouse, Universit\'e de Toulouse, INSA, Toulouse, 31077, France, \texttt{el-boukkouri@insa-toulouse.fr}} 
    \and
    Josselin Garnier\thanks{CMAP, CNRS, Ecole polytechnique, Institut Polytechnique de Paris, Palaiseau, 91120, France, \texttt{josselin.garnier@polytechnique.edu}} \and
    Olivier Roustant\thanks{Institut de Mathématiques de Toulouse, Universit\'e de Toulouse, INSA, Toulouse, 31077, France, \texttt{roustant@insa-toulouse.fr}} 
}
\begin{document}
\maketitle

\begin{abstract}
Dans cet article, nous considérons la propriété reproduisante dans les espaces de Hilbert à noyaux reproduisants (RKHS). Nous établissons une propriété de reproduction pour l'adhérence de la classe des combinaisons d'opérateurs de composition sous des conditions minimales. Cela nous permet de revisiter les conditions suffisantes pour que la propriété de reproduction soit valable pour l'opérateur dérivé, ainsi que pour l'existence de la fonction mean embedding. Ces résultats donnent un cadre d'application du théorème du représentant pour les algorithmes d'apprentissage régularisés qui impliquent des données sur les valeurs de fonctions, les gradients ou tout autre opérateur de la classe considérée.
\end{abstract}

\begin{abstract}
In this paper, we consider the reproducing property in Reproducing Kernel Hilbert Spaces (RKHS). We establish a reproducing property for the closure of the class of combinations of composition operators under minimal conditions. This allows to revisit the sufficient conditions for the reproducing property to hold for the derivative operator, as well as for the existence of the mean embedding function. These results provide a framework of application of the representer theorem for regularized learning algorithms that involve data for function values, gradients, or any other operator from the considered class.
\end{abstract}

\section*{Introduction}
Machine learning algorithms often involve penalized regression problems of the form
$$ \underset{h \in \HS}{\min} \sum_{i=1}^n (Lh (x_i) - y_i)^2 + \lambda \Vert h \Vert^2. $$
Here $(x_i, y_i)_{1 \leq i \leq n}$ is a dataset where $x_i$ belong to some set $\X$ and $y_i \in \R$, $\lambda \in \R_+$ is a penalty coefficient, $\HS$ is a Hilbert space of real-valued functions on $\X$, and $L$ is a linear operator on $\HS$. When $\HS$ is a reproducing kernel Hilbert space (RKHS), it turns out that the solution lives in a finite-dimensional space, a famous result known as ``representer theorem'' \cite{Reproducing_kernel, Representer_Theorem}.

A key property to prove this result is the generalized  ``reproducing property''
\begin{equation} \label{eq:LtildeIntro}
\forall x \in \X, \, \exists  \widetilde{L}(x) \in \HS \mbox{ s.t. } \forall h \in \HS, \,  Lh(x) = \langle h, \widetilde{L}(x) \rangle   .
\end{equation}
Note that, when $\widetilde{L}(x)$ exists, it is necessarily unique.
When $L$ is the identity operator, we have $\widetilde{L}(x) = K(x, .)$ where $K$ is the kernel associated to $\HS$,
\begin{equation*} 
\forall x \in \X, \forall h \in \HS: \qquad h(x) = \langle h, K(x, .) \rangle    ,
\end{equation*}
which is the original reproducing property of RKHS \cite{Reproducing_kernel}. This immediately extends to finite linear combinations, i.e. when $L$ has the form
$$
Lf(x) = \sum_{i=1}^{q} \alpha_{i} f(v_i(x)) ,
$$
where $\alpha_i \in \R$ and $v_i(x) \in \X$. In that case, $\widetilde{L}(x) = \sum_{i=1}^{q} \alpha_{i} K(v_{i}(x),.).$ However, the generalization to more complex operators such as derivative or integral operators is not straightforward as it involves a passage to the limit. 

In this paper, we consider a broad class of operators corresponding to limits of linear combinations and we give a necessary and sufficient condition on $K$ for \eqref{eq:LtildeIntro} to hold under the assumption that  \eqref{eq:LtildeIntro} remains true when passing to the limit. This allows us to revisit the reproducing properties in RKHS for derivative and integral operators.
Firstly, we focus on the derivative operator, and show that the reproducing property holds if the cross derivative of the kernel exists and is continuous on the diagonal of $\X \times \X$. Thereby we retrieve the result presented in \cite{christmann2008support,saitoh_sawano_book} by a different approach.
We prove that this condition is less restrictive than the well-known $C^2$ class condition \cite{Zhou, L2_kernel} and we exhibit a counterexample.

Secondly, we consider the mean embedding in RKHS within the frame of (improper) Riemann integrals. We prove that it is properly defined if the kernel is absolutely integrable.
This is similar to the result of \cite{carmeli2006vector, barp2024targeted, oates2022minimum} obtained for Lebesgue integrals.
Then, we go a step further and show that, under the same condition, the reproducing property holds, which does not seem to be reported in the literature. We show that it is less restrictive than the condition that $x \mapsto \sqrt{K(x,x)}$ is integrable \cite{Mean_embedding, muandet2017kernel} and we exhibit a counterexample.

\section{Reproducing property for linear operators}\label{sec:linearoperators}
A RKHS $\HS$ is a Hilbert space of real-valued functions defined on a set $\X$ such that for all \( x \in \X \), the evaluation $h \in \HS \mapsto h(x) $ is continuous. Giving a RKHS is equivalent to giving a positive semidefinite function, or kernel, $K$.
The link between $\HS$ and $K$ is given by the reproducing property: 
\[
 \forall f \in \HS, \forall x \in \X: \qquad f(x) = \langle f, K(x, \cdot) \rangle_{\mathcal{H}}   .
\]
This implies, by choosing $f=K(y, .)$, that  
$K(x,y) = \langle K(x, .), K(y, .) \rangle_\HS$ for all $x, y \in \X$.
\subsection{Main result}
We first recall the definition of so-called combination of composition operators, introduced in \cite{composition_op}.

\begin{definition}
Let $v: \X \to \X$ be an arbitrary function. 
The composition operator \( T_{v} \), with function \(v\), is defined by:
\[
T_{v}: \mathcal{H} \to \mathcal{F}(\X , \R), \quad T_{v}(f):= f \circ v    .
\]
We call a combination of composition operators with functions $v_i$ and weights $\alpha_i \in \R$ for $ 1 \leq i \leq q $ the operator:
\[ 
L = \sum_{i=1}^q \alpha_i T_{v_i}   .
\]
The class of combination of composition operators on functions on $\mathcal{H}$ is denoted $ \mathcal{CC(\HS)}$.\\
\end{definition}

Let $L = \sum_{i=1}^{q} \alpha_{i} T_{v_i} \in \CCclass$. Notice that for all $x \in \X$, $Lf(x) = \sum_{i=1}^{q} \alpha_{i} f(v_i(x))$. Thus, the linear form $f \in \HS \mapsto Lf(x)$ is continuous as a linear combination of evaluations of $\HS$. Following \cite[\S 4.4.]{berlinet2011reproducing}, we will denote by $\widetilde{L}(x) \in \HS$ its representer, i.e. the unique element of $\HS$ verifying
\begin{equation} \label{eq:Ltilde}
 Lf(x) = \langle f, \widetilde{L}(x) \rangle   . 
\end{equation}
We have explicitly 
\begin{equation} \label{eq:L_ntilde_expl}
    \widetilde{L}(x) = \sum_{i=1}^{q} \alpha_{i} K(v_{i}(x), .). 
\end{equation}
Mind that $\widetilde{L}(x) \neq L(K(x, .)) = \sum_{i=1}^{q} \alpha_{i} K(x, v_{i}(.)).$\\

The class $\CCclass$ is limited to finite linear combinations. We now define a natural extension of this class by considering its closure with respect to pointwise convergence.

\begin{definition} \label{def:Cclass}
We define the class of operators  
\[\Cclass= \big\{ L: \mathcal{H} \to \mathcal{F}(\X , \R): \exists (L_n)_{n \in \mathbb{N}} \in \mathcal{CC(\HS)}^{\N} \mbox{ s.t. } L(f) = \underset{n \to +\infty}{\lim} L_n(f) \, \forall f \in \mathcal{H}   \big\}   , \]  
where the limit is in the pointwise sense.
\end{definition}

If $L \in \Cclass$, writing $L_n = \sum_{i=1}^{q_n} \alpha_{i,n} T_{v_{i,n}}$ (where $q_n \in \N, \alpha_{i,n} \in \R, v_{i,n} \in \mathcal{F}({\X, \X}$)), we have for all $f \in \HS $ and all $x \in \X$,
$$
L(f)(x) = \underset{n \to +\infty}{\lim} \sum_{i=1}^{q_n} \alpha_{i,n} f(v_{i,n}(x)) .
$$
As an example, when $\X = \R$, the class \(\Cclass\) includes derivative and integral operators: 
\begin{itemize}
    \item If the functions of $\HS$ are differentiable, the derivative operator $L: f \mapsto f'$ is written as  $\underset{n \to +\infty}{\lim} L_n$ where $L_n = nT_{v_{2,n}}- n T_{v_{1, n}} $ with $v_{2,n}: 
    x \mapsto x + \frac{1}{n}$ and $v_{1, n}: 
    x \mapsto x $.  Here $\widetilde{L_n}(x) = n K(v_{2,n}(x), .) - n K(v_{1,n}(x), .)$.
    \item If the functions of $\HS$ are continuous, the averaging operator  $L: f \mapsto  x^{-1} \int_0^x f(t) dt$ (with $L(f)(0)=f(0)$) is written as $\underset{n \to +\infty}{\lim} L_n$ where $L_n = \sum_{i=1}^n \frac{1}{n} T_{v_{i,n}}$ with  $v_{i,n}: 
    x \mapsto  \frac{i}{n} x$.
   Here $\widetilde{L_n}(x) = \sum_{i=1}^n \frac{1}{n} K(v_{i,n}(x), .)$. \\
\end{itemize}

We now recall the Loève criterion for convergence of sequences in Hilbert spaces.

\begin{proposition}[Loève criterion]
    Let $\HS$ be a Hilbert space with inner product \(\langle ., . \rangle\).
    Let $(h_n)_{n \in \N}$ be a sequence of \(\mathcal{H}\). The following statements are equivalent: 
    \begin{itemize}
        \item[(i)] \((h_n)_{n \in \N}\) converges in \(\mathcal{H}\).
        \item[(ii)] The double sequence $(\langle h_n, h_m \rangle)_{n, m \in \N}$ has a finite limit as \(n, m\) tend to \(+\infty\).
    \end{itemize}
    \label{loève}
\end{proposition}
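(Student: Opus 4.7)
The plan is to prove the two implications separately, using in one direction the continuity of the inner product, and in the other the completeness of $\HS$ via a Cauchy argument.

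For $(i) \Rightarrow (ii)$, I would let $h = \lim_n h_n$ in $\HS$ and show that $\langle h_n, h_m \rangle \to \|h\|^2$ as $n, m \to +\infty$. Writing
\[
\langle h_n, h_m \rangle - \langle h, h \rangle = \langle h_n - h, h_m \rangle + \langle h, h_m - h \rangle,
\]
I would bound each term by Cauchy--Schwarz, using that $\|h_m\|$ is bounded (since it converges to $\|h\|$) and that $\|h_n - h\|, \|h_m - h\| \to 0$. This gives convergence of the double sequence to the finite limit $\|h\|^2$.

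For $(ii) \Rightarrow (i)$, denoting by $\ell \in \R$ the limit of $\langle h_n, h_m \rangle$, I would show that $(h_n)$ is Cauchy in $\HS$ and then invoke completeness. The key identity is
\[
\|h_n - h_m\|^2 = \langle h_n, h_n \rangle - \langle h_n, h_m \rangle - \langle h_m, h_n \rangle + \langle h_m, h_m \rangle.
\]
Given $\varepsilon > 0$, by assumption there exists $N$ such that for all $n, m \geq N$, $|\langle h_n, h_m \rangle - \ell| \leq \varepsilon/4$. Applying this bound to each of the four inner products on the right-hand side, the four terms nearly cancel and we obtain $\|h_n - h_m\|^2 \leq \varepsilon$. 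Hence $(h_n)$ is Cauchy and converges in $\HS$ by completeness.

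I do not anticipate a serious obstacle here: both directions are essentially routine manipulations, and the only subtlety is carefully interpreting the double-limit statement in $(ii)$ as the existence of a single $\ell \in \R$ with the uniform-in-$(n,m)$ approximation property described above. Once this is made precise, the polarization-type identity for $\|h_n - h_m\|^2$ does all the work.
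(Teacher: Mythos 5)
Your proposal is correct and follows essentially the same route as the paper: continuity of the inner product for $(i)\Rightarrow(ii)$, and the expansion of $\Vert h_n - h_m\Vert^2$ in terms of the three inner products, combined with the Pringsheim-type double limit (which in particular controls the diagonal terms $\langle h_n, h_n\rangle$) and completeness, for $(ii)\Rightarrow(i)$. The only difference is that you spell out the continuity argument and the $\varepsilon/4$ bookkeeping explicitly, which the paper leaves implicit.
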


In the whole paper the convergence of double sequences $(u_{n,m})_{n,m \in \N}$ -- such as the one appearing in (ii), is in the uniform sense (also known as Pringsheim sense \cite{tripathy2005convergent}):
if $c$ is its limit, $\forall \varepsilon > 0, \exists n_0 \in \N, \forall n,m \geq n_0, \vert u_{n,m} - c \vert \leq \varepsilon$.

\begin{proof}
The proof can be found in \cite[\S 3.5]{cramer} when $\HS$ is a space of square-integrable random variables. For completeness, we recall here the main arguments.\\
The direct sense (i) $\Rightarrow$ (ii) is obvious by continuity of the scalar product.\\
Let us now prove (ii) $\Rightarrow$ (i).
Denote by $c$ the limit of $(\langle h_n, h_m \rangle)$ when $n, m$ tend to infinity.
We have, for all $n, m \in \mathbb{N}$,
\begin{equation}
    \|h_n - h_m \|^2 =  \langle h_n, h_n\rangle +  \langle h_m, h_m\rangle -2  \langle h_n, h_m\rangle    .
    \label{eq:cauchy}
\end{equation}
 This implies that  $ \|h_n - h_m \|^2 \to c + c - 2c = 0$ when $ n,m \to + \infty$ . 
This proves that $(h_n)$ is a Cauchy sequence, and thus converges in $\HS$.
\end{proof}

For all operator $L$ and for all function $F: \X \times \X \to \R$ denote 
\[
L^\ell F: \X \times \X \to \mathbb{R}, \quad (x, y) \mapsto L(F(\cdot, y))(x),
\]
and
\[
L^r F: \X \times \X \to \R, \quad (x, y) \mapsto L(F(x, \cdot))(y).
\]
Notice that with these notations, for all $x, y \in \X$ we have $(L^\ell F)(., y) = L (F(., y))$ and $(L^r F)(x, .) = L (F(x, .))$. We will also simply write $L^\ell L^r$ instead of $L^\ell \mathord{o} L^r$.\\ 

\begin{theorem} Let $\mathcal{H}$ be a RKHS with kernel $K$.
Let $(L_n)_{n \in \N}$ be a sequence of $\CCclass$. 
We denote by $\widetilde{L_n}(x) \in \HS$ the representer of the linear form $f \in \HS \mapsto L_nf(x)$ (see \eqref{eq:Ltilde}).
The following conditions are equivalent: 
\begin{itemize}
    \item[(i)] $(L_n^\ell L_m^r K(x,x))_{n, m}$ converges when $n, m$ tend  to $+\infty$ for all $x \in \X$.
    \item[(ii)] $(\widetilde{L_n}(x))_{n \in \N}$ converges in $\mathcal{H}$ for all $x \in \X $. 
\end{itemize}
In that case, denote $\widetilde{L}(x):= \underset{n \to +\infty}{\lim} \widetilde{L_n}(x) \in \HS$. Then, for all $f \in \HS$, $L_n f$ converges pointwise. Define the operator $L \in \Cclass$ by $Lf(x) = \underset{n \to +\infty}{\lim} L_n f(x)$ (for all $f \in \HS$, $x \in \X$).
Then, for all $x \in \X$ the mapping $f \in \HS \mapsto Lf(x)$ is continuous and the reproducing property holds: 
    \[ 
    \forall x \in \X, \forall f \in \HS: \qquad Lf(x) = \langle f, \widetilde{L}(x) \rangle.
    \]
    \label{THM Property}
Finally, $\Vert \widetilde{L}(x) \Vert^2 = L^\ell L^r K(x,x)$. 
\end{theorem}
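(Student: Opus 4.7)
The plan is to reduce the equivalence (i)$\Leftrightarrow$(ii) to the Loève criterion (Proposition~\ref{loève}) by explicitly computing the Gram matrix of the representers. Writing $L_n=\sum_{i=1}^{q_n}\alpha_{i,n}T_{v_{i,n}}$ and $L_m=\sum_{j=1}^{q_m}\alpha_{j,m}T_{v_{j,m}}$, formula \eqref{eq:L_ntilde_expl} combined with the ordinary reproducing property of $\HS$ yields
\[
\langle \widetilde{L_n}(x),\widetilde{L_m}(x)\rangle \;=\; \sum_{i,j}\alpha_{i,n}\alpha_{j,m}\,K(v_{i,n}(x),v_{j,m}(x)).
\]
A direct unfolding of the definitions of $L^\ell$ and $L^r$ shows that $L_n^\ell L_m^r K(x,x)$ is exactly this same double sum. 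Hence (i) is equivalent to the convergence of $(\langle \widetilde{L_n}(x),\widetilde{L_m}(x)\rangle)_{n,m}$ in the Pringsheim sense, which by Proposition~\ref{loève} is equivalent to (ii).

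Assuming these equivalent conditions hold, set $\widetilde{L}(x):=\lim_{n\to\infty}\widetilde{L_n}(x)\in\HS$. For any $f\in\HS$, the identity $L_n f(x)=\langle f,\widetilde{L_n}(x)\rangle$ combined with continuity of the inner product gives $L_n f(x)\to\langle f,\widetilde{L}(x)\rangle$, so $L_n f$ converges pointwise. The operator $L$ defined by $Lf(x):=\lim_n L_n f(x)$ thus belongs to $\Cclass$ by Definition~\ref{def:Cclass}, it satisfies the reproducing identity $Lf(x)=\langle f,\widetilde{L}(x)\rangle$, and the continuity of $f\mapsto Lf(x)$ is immediate from Cauchy--Schwarz applied to this identity.

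For the norm formula, applying the just-established reproducing property with $f=K(x,\cdot)$ gives $(L^rK)(x,y)=L(K(x,\cdot))(y)=\langle K(x,\cdot),\widetilde{L}(y)\rangle=\widetilde{L}(y)(x)$, so that $(L^rK)(\cdot,y)=\widetilde{L}(y)\in\HS$. Applying $L$ in the first variable and invoking the reproducing property once more produces $L^\ell L^rK(x,y)=L(\widetilde{L}(y))(x)=\langle\widetilde{L}(y),\widetilde{L}(x)\rangle$, which at $y=x$ is exactly $\|\widetilde{L}(x)\|^2$. The only delicate step is essentially bookkeeping: one must carefully match $L_n^\ell L_m^r K(x,x)$ with the Gram sum and remain consistent about the Pringsheim (uniform) notion of convergence needed by Loève. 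Beyond this, everything follows from the ordinary reproducing property and continuity of the inner product.
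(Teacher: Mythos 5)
Your proof is correct and follows essentially the same route as the paper: identify $\langle \widetilde{L_n}(x),\widetilde{L_m}(x)\rangle$ with $L_n^\ell L_m^r K(x,x)$, invoke the Loève criterion, and then pass to the limit in the identity $L_nf(x)=\langle f,\widetilde{L_n}(x)\rangle$. The only cosmetic difference is that you verify the key Gram identity by expanding both sides as explicit double sums over the terms $\alpha_{i,n}T_{v_{i,n}}$, whereas the paper derives it abstractly by applying the representer definition twice (first with $f=K(x,\cdot)$, then with $f=L_m^rK(\cdot,y)$); your derivation of the norm formula from the already-established reproducing property of $L$ is likewise equivalent to the paper's limit argument.
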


\begin{proof}
Let $x \in \X$. We first prove that
\begin{equation} \label{eq:LLK}
\langle \widetilde{L_n}(x), \widetilde{L_m}(x) \rangle = L_n^\ell L_m^r K(x, x).
\end{equation}

Let $y \in \X$. 
Denote $F_m = L_m^r K$. Using the definition (\ref{eq:Ltilde}) of $\widetilde{L_m}(y)$ (with $f = K(x, .)$), we have
\begin{equation} \label{eq:definitionF_m}
 F_m (x, y) = (L_m K(x, .))(y) 
= \langle K(x, .), \widetilde{L_m}(y) \rangle 
= \widetilde{L_m}(y)(x)   .
\end{equation}

Thus, $F_m(., y) = \widetilde{L_m}(y)$ by uniqueness. 
Similarly, noting that $F_m(., y) \in \HS$ and using the definition (\ref{eq:Ltilde}) of $\widetilde{L_n}(x)$ (with $f = F_m(., y)$), we have
\begin{equation}\label{eq:LnFm}
L_n^\ell F_m (x, y) = (L_n F_m(., y))(x) 
= \langle F_m(., y), \widetilde{L_n}(x) \rangle.     
\end{equation}

Finally, we get
$ L_n^\ell L^r_m K (x, y) 
= \langle \widetilde{L_m}(y), \widetilde{L_n}(x) \rangle$, which gives \eqref{eq:LLK} when $y=x$.

From Equation \eqref{eq:LLK},  by the Loève criterion (Proposition \ref{loève}), the sequence 
$\widetilde{L_n}(x)$ converges in the Hilbert space $\HS$ for all $x \in \X$ if and only if 
$$
\lim_{n,m \to \infty} L_n^\ell L_m^r K (x, x)
$$
exists at each point \( x \in \X \).
This proves that (i) and (ii) are equivalent.

Suppose now that (i) or (ii) are verified and denote $\widetilde{L}(x):= \underset{n \to +\infty}{\lim} \widetilde{L_n}(x) \in \HS$.
Writing the equality (\ref{eq:Ltilde}) defining $\widetilde{L_n}(x)$, for a given $f \in \HS$ and $x \in \X$, 
$$
L_n f(x) = \langle f, \widetilde{L_n}(x) \rangle  ,
$$
and taking the limit when $n$ tends to infinity, we obtain that $L_n f(x)$ converges. Denoting by $Lf(x)$ its limit, we thus have:
$$ Lf(x) = \langle f, \widetilde{L}(x) \rangle. $$
This ensures that the mapping $f \in \HS \mapsto Lf(x)$ is continuous.
Finally, by taking the limit in \eqref{eq:LLK}, we get: 
$$ \Vert \widetilde{L}(x) \Vert^2 = L^\ell L^r K(x,x).$$
Indeed, let $F = L^rK$ and $x,y \in \X$. By taking the limit in \eqref{eq:definitionF_m}, we get $F(.,y) = \widetilde{L}(y) \in \HS$. Then, taking the limit in \eqref{eq:LnFm}, we obtain
$L^\ell F (x, y) = \langle F(., y), \widetilde{L}(x) \rangle$, or equivalently, 
$ \langle \widetilde{L}(x),  \widetilde{L}(y)\rangle = L^\ell L^r K(x,y).$
\end{proof}

\section{Derivative Reproducing Property}
\label{sec:derivative}
The reproducing property for the derivative operator has been extensively studied in the literature under various conditions. For example, in \cite{Zhou}, this result is established for Mercer kernels of class \( C^2 \). A weaker condition is given in \cite[\S 2]{saitoh_sawano_book} and \cite[Cor 4.36]{christmann2008support}, namely that the cross derivative \(\frac{\partial^2 K}{\partial x_1 \partial x_2}\) exists and is continuous on \(\X \times \X\). In what follows, we obtain a similar result, demanding the continuity at the neighborhood of the diagonal of $\X \times \X$, by following a different route with the class of compositions of operators. We argue that the continuity assumption, sometimes omitted \cite{gaetan2010second, cramer}, is important when the RKHS is not supposed \textit{a priori} to contain differentiable functions.

\begin{theorem}     \label{thm:derivative}
Let $\HS$ be a RKHS of real-valued functions defined on a non-empty open set $\X \subset \R$, with reproducing kernel $K$. If $K$ is of class $C^1$ and $\frac{\partial^2 K}{\partial x_1 \partial x_2}$ exists and is continuous on a neighborhood of the diagonal, then
\begin{itemize}
    \item[(a)] For all $x \in \X$,  $\frac{\partial K}{\partial x_1}(x, .) $ is in $\HS$, and $\Vert \frac{\partial K}{\partial x_1}(x, .) \Vert^2 = \frac{\partial^2 K}{\partial x_1 \partial x_2}(x,x)$.
    \item[(b)] All functions of $\HS$ are differentiable, the mapping $f \in \HS \mapsto f'(x)$ is continuous, and the derivative reproducing property holds: 
    \[ 
    \forall x \in \X, \forall f \in \HS: \qquad 
    f'(x) = 
    \langle f,  \frac{\partial K}{\partial x_1}(x,.) \rangle .
    \]
 \end{itemize}
\end{theorem}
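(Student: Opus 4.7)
The plan is to apply Theorem~\ref{THM Property} to a sequence of difference-quotient operators with an arbitrary step. Fix $x \in \X$, pick $\delta>0$ with $(x-\delta,x+\delta)\subset\X$, and let $(h_n)$ be any sequence of nonzero reals with $|h_n|<\delta$ and $h_n\to 0$. Define $L_n = h_n^{-1}(T_{v_{2,n}}-T_{v_{1,n}})$ with $v_{2,n}(y)=y+h_n$ (suitably redefined outside a neighborhood of $x$ so that it maps $\X$ into $\X$; since $L_n$ is only evaluated at $x$ this freedom is harmless) and $v_{1,n}(y)=y$. Then $L_n\in\CCclass$ and $\widetilde{L_n}(x) = h_n^{-1}\big(K(x+h_n,\cdot)-K(x,\cdot)\big)$, with limit candidate $\frac{\partial K}{\partial x_1}(x,\cdot)$.

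The key technical step is verifying condition~(i) of Theorem~\ref{THM Property}, namely that
\[
L_n^\ell L_m^r K(x,x) \;=\; \frac{K(x+h_n,x+h_m)-K(x+h_n,x)-K(x,x+h_m)+K(x,x)}{h_n h_m}
\]
has a finite Pringsheim limit as $n,m\to\infty$. I would apply the fundamental theorem of calculus twice: first in the second variable, using that $\frac{\partial K}{\partial x_2}$ is continuous (since $K\in C^1$), to rewrite the numerator as $\int_0^{h_m}\big[\frac{\partial K}{\partial x_2}(x+h_n,x+t) - \frac{\partial K}{\partial x_2}(x,x+t)\big]\,dt$; then in the first variable, using that $\frac{\partial^2 K}{\partial x_1 \partial x_2}$ exists and is continuous on a neighborhood of the diagonal, to obtain $\int_0^{h_n}\!\int_0^{h_m}\frac{\partial^2 K}{\partial x_1\partial x_2}(x+s,x+t)\,dt\,ds$ (valid once $n,m$ are large enough for $(x+s,x+t)$ to sit inside that neighborhood). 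Dividing by $h_n h_m$ and invoking continuity of the cross derivative at $(x,x)$ gives the limit $\frac{\partial^2 K}{\partial x_1\partial x_2}(x,x)$.

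Theorem~\ref{THM Property} then produces a limit $\widetilde{L}(x)\in\HS$ of $(\widetilde{L_n}(x))$ with $\|\widetilde{L}(x)\|^2 = \frac{\partial^2 K}{\partial x_1\partial x_2}(x,x)$. Convergence in $\HS$ implies pointwise convergence, and the $C^1$ assumption gives $h_n^{-1}(K(x+h_n,y)-K(x,y))\to\frac{\partial K}{\partial x_1}(x,y)$ for each $y$, so $\widetilde{L}(x) = \frac{\partial K}{\partial x_1}(x,\cdot)\in\HS$. This proves~(a) together with the norm formula. For~(b), the crucial observation is that this identification is independent of the sequence $(h_n)$: therefore for every $f\in\HS$ the difference quotient $h_n^{-1}(f(x+h_n)-f(x))$ converges to the common value $\langle f,\frac{\partial K}{\partial x_1}(x,\cdot)\rangle$ along every sequence $h_n\to 0$, so by the sequential characterization of limits $f$ is differentiable at $x$ with $f'(x) = \langle f,\frac{\partial K}{\partial x_1}(x,\cdot)\rangle$. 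Continuity of $f\mapsto f'(x)$ is then immediate from the inner-product representation.

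The main obstacle is the Pringsheim (uniform) nature of the double limit: the two step sizes must go to zero jointly, ruling out the classical shortcut of letting $n\to\infty$ before $m$. The double-integral identity is what handles this, reducing uniform convergence to continuity of $\frac{\partial^2 K}{\partial x_1\partial x_2}$ on a whole neighborhood of the diagonal — which explains why this hypothesis (rather than mere existence or continuity at each diagonal point) is the natural one.
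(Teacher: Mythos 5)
Your proposal is correct and follows essentially the same route as the paper: difference-quotient operators $L_n$, verification of condition (i) of Theorem~\ref{THM Property} by rewriting the second-order difference quotient as a double integral of $\frac{\partial^2 K}{\partial x_1 \partial x_2}$ over $[0,h_n]\times[0,h_m]$ and invoking continuity at $(x,x)$, identification of the $\HS$-limit with $\frac{\partial K}{\partial x_1}(x,\cdot)$ via pointwise convergence, and sequence-independence to pass from sequences to the full limit $t\to 0$ for part (b). Your explicit handling of the domain issue for $v_{2,n}$ on the open set $\X$ is a small point the paper leaves implicit, but it does not change the argument.
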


We will need the well-known fact that convergence in $\HS$ implies pointwise convergence (see \cite{berlinet2011reproducing}).
\begin{lemma} \label{lemma:RKHSandPointwiseConvergence}
Let $\HS$ a RKHS on $\X$ with kernel $K$, and let $(h_x)_{x \in \X}$ a family of functions of $\HS$. If for some $x_0 \in \X$, $h_x$ converges in $\HS$ when $x \to x_0$, then $h_x$ converges pointwise to the same limit. 
\end{lemma}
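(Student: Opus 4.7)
The plan is to use the reproducing property together with the Cauchy--Schwarz inequality to bound the pointwise difference between $h_x$ and its purported limit by the norm difference. Let $h \in \HS$ denote the $\HS$-limit of $h_x$ as $x \to x_0$, so that $\|h_x - h\|_{\HS} \to 0$.

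Fix an arbitrary $t \in \X$. By the reproducing property applied to $h_x - h \in \HS$, we have
\[
h_x(t) - h(t) = \langle h_x - h, K(t, \cdot) \rangle_{\HS}.
\]
Applying Cauchy--Schwarz and using $\|K(t, \cdot)\|_{\HS} = \sqrt{K(t,t)}$ gives
\[
|h_x(t) - h(t)| \leq \|h_x - h\|_{\HS} \, \sqrt{K(t,t)}.
\]
Since $\sqrt{K(t,t)}$ is a constant depending only on $t$, and the right-hand side tends to $0$ as $x \to x_0$, we conclude that $h_x(t) \to h(t)$ as $x \to x_0$. This holds for every $t \in \X$, which is exactly pointwise convergence to the same limit.

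There is essentially no obstacle here: the lemma is a one-line consequence of the continuity of evaluation functionals, which is the defining property of a RKHS. The only thing to be careful about is to state up front that $h$ denotes the $\HS$-limit and to note that this limit belongs to $\HS$ (by completeness), so that the reproducing property can be applied to $h_x - h$.
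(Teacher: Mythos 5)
Your proof is correct and follows exactly the same route as the paper's: apply the reproducing property to $h_x - h$ and bound $|h_x(t)-h(t)|$ via Cauchy--Schwarz by $\|h_x-h\|_{\HS}\,\|K(t,\cdot)\|_{\HS}$. The only cosmetic difference is that you write out $\|K(t,\cdot)\|_{\HS}=\sqrt{K(t,t)}$ explicitly, which the paper leaves implicit.
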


\begin{proof}
Let $h_\infty \in \HS$ be the limit of $h_x$ when $x \to x_0$. Let $y \in \X$.
By the reproducing property and the Cauchy-Schwarz inequality, 
$$ \vert h_x(y) - h_\infty(y) \vert
= \vert \langle  h_x - h_\infty, K(y, .) \rangle
\vert \leq 
\Vert h_x - h_\infty \Vert
\Vert K(y, .) \Vert.
$$
The result follows.
\end{proof}

\begin{proof}[Proof of Theorem \ref{thm:derivative}]
Suppose that $K$ is of class $C^1$ and $\frac{\partial^2 K}{\partial x_1 \partial x_2}$ exists and is continuous on a neighborhood of the diagonal.\\
Let $(u_n)$ be a real-valued sequence converging to $0$ as $n$ tends to infinity with $u_n\neq~0$ $ \forall n \in \N$.\\
Let $n \in \N$ and $x \in \X$ and consider the operator $L_n \in \CCclass$ defined by 
    \[
    L_n = \frac{T_{v_{2,n}} - T_{v_{1,n}}}{u_n}, \quad \text{where} \quad v_{2,n}(t) = t +u_n, \quad v_{1,n}(t) = t.
    \] 
    We have
    \[
    L_n(f)(x) = \frac{f(x+u_n) - f(x)}{u_n} = \langle f, \widetilde{L_n}(x) \rangle   ,
    \]
    with
    \[ \widetilde{L_n}(x) = \frac{K(x + u_n, .) -K(x,.)}{u_n}.
    \]
    Notice that
\begin{eqnarray*}
L_n^\ell L_m^r K(x,x) 
=\frac{K(x + u_n, x + u_m) - K(x+u_n, x) - K(x, x+u_m) + K(x,x)}{u_n u_m} .
\end{eqnarray*}
   As \(
   \frac{\partial^2 K}{\partial x_1 \partial x_2}
   \) exists and is continuous on a neighborhood of the diagonal, the function $(u', v') \mapsto \frac{\partial^2 K}{\partial x_1 \partial x_2}(x + u',x + v')$ is continuous on a neighborhood of $(0, 0)$. This justifies that
   there exists $n_0 \in \N$ such that for all $n,m \geq n_0$, 
    \begin{eqnarray*}
    L_n^\ell L_m^r K(x,x) 
    = \frac{\int_0^{u_n} \int_0^{u_m}\frac{\partial^2 K}{\partial x_1 \partial x_2}(x + u',x + v')du' dv' }{u_n u_m},
    \end{eqnarray*}
    which can be obtained by two successive integrations with respect to $u'$ and $v'$ respectively. As $ \frac{\partial^2 K}{\partial x_1 \partial x_2}$ is continuous at $(x,x)$, then for a given $\epsilon > 0$, there exists $ r > 0$ such that for all $u',v' \in [-r, r]$, 
    $$ \left| \frac{\partial^2 K}{\partial x_1 \partial x_2}( x + u', x+ v') - \frac{\partial^2 K}{\partial x_1 \partial x_2}(x,x)  \right| < \epsilon $$
    As the sequence $(u_n)$ converges to $0$, there exists $N_0 \in \N$, such that for all $n \geq N_0$, $u_n \in [-r, r] $. Then for all $n ,m \geq \max(n_0, N_0)$,
\[
        \left| 
         L_n^\ell L_m^r K(x,x) -  \frac{\partial^2 K}{\partial x_1 \partial x_2}(x,x) \right| 
         \leq \left| \int_0^{u_n} \int_0^{u_m} \frac{ \left| \frac{\partial^2 K}{\partial x_1 \partial x_2}(x + u',x + v') - \frac{\partial^2 K}{\partial x_1 \partial x_2}(x,x) \right| du' dv' }{u_n u_m}  \right| 
     \leq \epsilon
\]

    This implies that $(L_n^\ell L_m^r K(x,x))_{n, m}$ converges when $n, m$ tend  to $+\infty$ to $\frac{\partial^2 K}{\partial x_1 \partial x_2}(x ,x )$.\\
    Then using Theorem~\ref{THM Property}, we conclude that  $(\widetilde{L_n}(x))_{n \in \N}$ converges in $\HS$.
    Denoting by $\widetilde{L}(x)$ its limit, we have by Lemma \ref{lemma:RKHSandPointwiseConvergence}, 
    \[  \frac{\partial K}{\partial x_1}(x, .) = \widetilde{L}(x). \] 
    As the limit of $\widetilde{L_n}(x)$ does not depend on the sequence $u_n$, we can conclude that $\frac{K(x + t, .) - K(x, .)}{t} $ converges in $\HS$ when $t$ tends to $0$ to $\frac{\partial K}{\partial x_1}(x, .)$.\\
    To conclude the proof of (a), let us show that $$ \Vert \frac{\partial K}{\partial x_1}(x, .) \Vert^2 = \frac{\partial^2 K}{\partial x_1 \partial x_2}(x,x)
    .$$
    From Theorem~\ref{THM Property}, for all $f \in \HS$, $L_n f$ converges pointwise, and with $Lf(x) = \underset{n \to +\infty}{\lim} L_n f(x)$, we have 
    $$ 
    \Vert \widetilde{L}(x) \Vert^2 = L^\ell L^r K(x,x)  .
    $$
    Recall that $(L_n^\ell L_m^r K(x,x))_{n, m}$ converges to $\frac{\partial^2 K}{\partial x_1 \partial x_2}(x ,x )$ when $n, m$ tend  to $+\infty$. In particular, $(L_n^\ell L_n^r K(x,x))$ converges to $\frac{\partial^2 K}{\partial x_1 \partial x_2}(x ,x )$ when $n$ tends to $+\infty$. This implies that $  L^\ell L^r K(x,x) =  \frac{\partial^2 K}{\partial x_1 \partial x_2}(x ,x) $
    And thus, $$ \Vert \frac{\partial K}{\partial x_1}(x, .) \Vert^2 = \frac{\partial^2 K}{\partial x_1 \partial x_2}(x,x)  .
    $$
    
    It remains to show (b). Let $f \in \HS$ and $x \in \X$. We know that $\frac{\partial K}{\partial x_1}(x,.)$ is in $\HS$.
    We have
    $$ 
    \frac{f(x + t) -f(x)}{t} =  \langle f, \frac{K(x + t, .) - K(x, .)}{t} \rangle .
    $$
    As $\frac{K(x + t, .) - K(x, .)}{t} $ converges to $\frac{\partial K}{\partial x_1}(x, .)$ in $\HS$ when $t$ tends to $0$ , we obtain that $f$ is differentiable at $x$ and 
    $$  f'(x) = 
    \langle f,  \frac{\partial K}{\partial x_1}(x,.) \rangle .
    $$
    This implies that the mapping $f \in \HS \mapsto f'(x)$ is continuous.
\end{proof}

\begin{remark}
Theorem \ref{thm:derivative} implies that if $Y$ is a centered second-order random process with covariance function $K$, then $Y$ is everywhere differentiable in quadratic mean (q.m.), the second-order cross derivative $\frac{\partial^2 K}{\partial x_1 \partial x_2}$ exists everywhere and $\cov(Y'(s), Y'(t)) = \frac{\partial^2 K}{\partial x_1 \partial x_2}(s, t)$ \cite{gaetan2010second}.
\end{remark}
\begin{remark}
It may be not sufficient to assume only the existence of the cross derivative on the diagonal (without assuming its continuity on a neighborhood of the diagonal). Indeed, in the proof of Theorem  \ref{thm:derivative}, we can write 
\begin{eqnarray*}
L_n^\ell L_m^r K(x,x) 
= \frac{\frac{K(x + u_n , x + u_m) - K(x + u_n, x)}{u_m} - \frac{K(x , x + u_m) - K(x , x)}{u_m}}{u_n}   ,
\end{eqnarray*}
which leads to the fact that 
$\underset{n \to +\infty}{\lim} \underset{m \to +\infty}{\lim} L_n^\ell L_m^r K(x,x) $ exists. Here, the limit is taken sequentially firstly in $m$ and secondly in $n$. This does not imply that the limit of this double sequence exists when $n, m$ tend to infinity, which is required by the Loève criterion (Proposition \ref{loève}). \\
Finally, notice that the continuity of the cross derivative can be omitted if one assumes in addition from the outset that all functions in \(\HS\) are differentiable, as proved in \cite[Lemma 4]{barp2024targeted}.

\end{remark}

Theorem \ref{thm:derivative} gives a sufficient condition on $K$ for a derivative reproducing property to hold, namely the existence of the cross-derivative and its continuity on a neighborhood of the diagonal. This is
less restrictive than the $C^2$ condition on $K$ found in the literature. Indeed, the following example exhibits a kernel $K$ that is not of class $C^2$ but such that $\frac{\partial^2 K}{\partial x_1 \partial x_2}$ exists and is continuous.

\begin{example}[Non-$C^2$ Kernel with finite cross derivative]
Consider the function:
\[
f: \R \to \R, \quad f(x) = 
\begin{cases} 
x^4 \sin\left(\frac{1}{x}\right) & \text{if } x \neq 0, \\
0 & \text{if } x = 0.
\end{cases}
\]

We notice that \( f \) is twice differentiable on \( (-\infty, 0) \) and \( (0, \infty) \).
For \( x \in \R \backslash \{0\} \) , the first and second derivatives are:
\[
f'(x) = 4x^3 \sin\left(\frac{1}{x}\right) - x^2 \cos\left(\frac{1}{x}\right)
\]
and
\[
f''(x) = 12x^2 \sin\left(\frac{1}{x}\right) - 6x \cos\left(\frac{1}{x}\right) - \sin\left(\frac{1}{x}\right).
\]
Observe that
\[
 \frac{f(x) - f(0)}{x} =  \frac{x^4 \sin\left(\frac{1}{x}\right)}{x} = x^3 \sin\left(\frac{1}{x}\right)   ,
\]
which proves that \( f \) is differentiable at \( x = 0 \) with \( f'(0) = 0 \).
Similarly, 
\[
 \frac{f'(x) - f'(0)}{x} = \frac{f'(x)}{x} =  \left( 4x^2 \sin\left(\frac{1}{x}\right) - x \cos\left(\frac{1}{x}\right) \right)   ,
\]
which shows that \( f \) is twice differentiable at \( x = 0 \) with \( f''(0) = 0 \).
We conclude that \( f \) is twice differentiable on \( \R \), with the second derivative given by:
\[
f''(x) = 
\begin{cases}
12x^2 \sin\left(\frac{1}{x}\right) - 6x \cos\left(\frac{1}{x}\right) - \sin\left(\frac{1}{x}\right) & \text{if } x \neq 0, \\
0 & \text{if } x = 0.
\end{cases}
\]
However, $f''$ has no limit at $0$, due to the term $\sin(\frac{1}{x})$, which means that $f$ is not $C^2$.\\
Now, consider the rank-one kernel $K$: 
\[
K: \R\times\R \to \R, \quad K(x,y) = f(x)f(y).
\]
\( K \) is not of class \( C^2 \) since for all $y \in \R$, the function $x \mapsto \frac{\partial^2 K}{\partial x_1^2}(x,y) = f''(x) f(y)$ is not continuous at $x=0$. Nevertheless, the cross derivative $ \frac{\partial^2 K}{\partial x_1 \partial x_2}(x, y) = f'(x) f'(y) $ exists for all \( x, y \in \mathbb{R} \) and is continuous. 
\end{example}

We now extend the previous result on the derivative reproducing property to partial derivatives. We will identify the conditions required on the kernel \(K : \mathbb{R}^d \times \mathbb{R}^d \to \mathbb{R}\), with $d \in \N^*$, for the reproducing property to hold.

\section{Mean Embedding in RKHS}
\label{sec:meanembedding}
In this subsection, we consider a random variable $X$ defined on $\X \subset \R$ with probability density function $p$ and a RKHS $\mathcal{H}$ of  real-valued  functions  defined  on $\X$ with kernel $K$. We seek to establish the minimal condition under which there exists a function $\mu_p \in \HS$ satisfying: 
\[ 
\mathbb{E}_{X \sim p} f(X) = \langle f , \mu_p \rangle, \quad \forall f \in \HS    .
\]
The function $\mu_p$ is called the mean embedding of $p$.

Fukumizu et al. \cite{Mean_embedding} proved that $\mu_p$ exists if $\mathbb{E}_{X \sim p} \sqrt{K(X,X)} < \infty $ in the sense of the Lebesgue integral. However, the function $\mu_p$ exists under a less restrictive condition as we will demonstrate.\\
In what follows, our results will use the integral in the Riemann sense. To demonstrate it, we will need to use some notions related to the improper Riemann integral \cite{Zorich}.

\begin{definition}[Admissible Set] \label{def:admissible_set}
A set \( E \subset \mathbb{R}^d \) is admissible if it is bounded and its boundary \( \partial E \) has Lebesgue measure zero.
The integral of a function \( f \) over \(E \) is defined as:
\[
\int_E f(x) \, dx = \int_I f(x) \mathbb{1}_E(x) \, dx,
\]
where \( I \) is some interval in \( \mathbb{R}^d \) (i.e. a set of the form $\{ x \in \R^d, \, a^i\leq x_i \leq b^i, i=1,\ldots,d\}$, for $a^i<b^i\in \mathbb{R}$) such that \( E\subset I \).
\end{definition}
If the integral on the right-hand side of this equality exists, then we say that $f$ is Riemann integrable over $E$. Lebesgue’s criterion \cite[\S 11.1.2, Thm 1]{Zorich} 
for the existence of the Riemann integral over an interval implies that a function $f:E\to \R$ is integrable over an admissible set $E$ if and only if it is bounded and continuous at almost all points of~$E$ \cite[\S 11.2.2, Thm 1]{Zorich}.

\begin{definition}[Exhaustion]
An exhaustion of a set \( E \subset \mathbb{R}^d \) is a sequence of admissible sets \((E_s)_{s \in \N} \) such that:
\[
E_s \subset E_{s+1} \subset E \quad \text{for all } s \in \mathbb{N},
\]
and
\[
\bigcup_{s=1}^\infty E_s = E.
\]
\end{definition}
The notion of exhaustion defines the improper integral over a set $E \subset \R^d$.
\begin{definition}[Riemann improper integral]
Let $(E_s)_{s \in \N}$ be an exhaustion of the set $E$ and suppose the function $f :  E \to \R $ is Riemann integrable on the $E_s$ for all $s \in \N$. If the sequence $ \int_{E_s} f(x) dx$ converges and has a limit independent of the choice of the exhaustion, then $f$ admits an improper integral over $\X$ defined by
\[
\int_E f(x) dx = \underset{s \to \infty}{\lim} \int_{E_s} f(x) dx.
\]
\end{definition}

\begin{proposition}[Comparison Test for Improper Integrals, \cite{Zorich}] \label{prop:comp}
Let \( f \) and \( g \) be functions defined on \( E \) and integrable over exactly the same admissible subsets of $E$, and suppose \( |f| \leq g \) on \( E \). If the improper integral
\(
\int_E g(x) \, dx
\)
exists, then the improper integrals
\(
\int_E |f(x)| \, dx \quad \text{and} \quad \int_E f(x) \, dx
\)
also exist.

\end{proposition}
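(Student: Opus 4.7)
The plan is to first reduce the general statement to the non-negative case via the decomposition $f = f^+ - f^-$, then establish convergence for a single exhaustion by monotonicity, and finally prove independence of the choice of exhaustion.

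\textbf{Step 1: Reduction to non-negative integrands.} Writing $f = f^+ - f^-$ with $f^\pm \geq 0$, we have $|f| = f^+ + f^-$ and $0 \leq f^\pm \leq |f| \leq g$ on $E$. Since by Lebesgue's criterion a bounded function on an admissible set is Riemann integrable iff it is a.e.\ continuous, and this property passes to $|f|, f^+, f^-$, the functions $f^\pm$ are integrable over exactly the same admissible subsets as $f$. If the improper integrals of $f^+$ and $f^-$ exist, then by linearity on each $E_s$ the improper integral of $f$ exists and equals their difference, and the improper integral of $|f|$ equals their sum. Thus it suffices to prove: if $0 \leq h \leq g$ on $E$ and $h$ is integrable on the same admissible subsets as $g$, then $\int_E h$ exists.

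\textbf{Step 2: Convergence for a fixed exhaustion.} Let $(E_s)_{s\in\N}$ be any exhaustion of $E$. Since $h \geq 0$ and $E_s \subset E_{s+1}$, the sequence $I_s := \int_{E_s} h$ is non-decreasing. Monotonicity of the Riemann integral on admissible sets, combined with $h \leq g$ and the existence of $\int_E g$, yields $I_s \leq \int_{E_s} g \leq \int_E g < +\infty$. Hence $(I_s)$ converges to some $A \in \R_+$.

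\textbf{Step 3: Independence of the exhaustion.} Let $(E_s)$ and $(E'_t)$ be two exhaustions, with corresponding limits $A$ and $A'$. Fix $s$. Since $E_s$ is admissible and $\bigcup_t E'_t = E$, one uses the fact that the sequence $\int_{E'_t \cap E_s} g$ converges to $\int_{E_s} g$ (this is the crux: it follows by applying the existence of $\int_E g$ to the exhaustion $(E'_t \cap E_s)$ of $E_s$, which is itself admissible and bounded, so no boundary pathology interferes). By monotonicity, $0 \leq \int_{E_s \setminus E'_t} h \leq \int_{E_s \setminus E'_t} g \to 0$ as $t \to \infty$. Therefore $\int_{E_s} h = \lim_t \int_{E'_t \cap E_s} h \leq \lim_t \int_{E'_t} h = A'$. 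Letting $s \to \infty$ gives $A \leq A'$, and symmetry gives equality. The existence of $\int_E h$ with value $A$ follows.

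The main obstacle is Step 3, namely controlling $\int_{E_s \setminus E'_t} h$ uniformly in $s$ as $t \to \infty$. The argument rests on the dominating integrability of $g$ together with the stability of admissibility under finite intersections and differences (so that the intermediate sets $E'_t \cap E_s$ remain admissible and integrable); once that is in place, everything else reduces to elementary monotone convergence arguments.
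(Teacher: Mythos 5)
The paper gives no proof of this proposition: it is quoted from Zorich's textbook with a citation only, so there is nothing in-paper to compare against. Your argument is essentially the standard textbook proof (decompose $f$ into $f^{+}$ and $f^{-}$, use monotone boundedness for nonnegative integrands, then establish exhaustion-independence via the dominating function $g$), and Steps 1 and 2 are correct as written.

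The one place where your argument does not close is the justification of what you yourself call the crux of Step 3. You need $\int_{E_s\setminus E'_t} g \to 0$ as $t\to\infty$ for fixed $s$ (equivalently $\int_{E'_t\cap E_s} g \to \int_{E_s} g$), and you justify it by ``applying the existence of $\int_E g$ to the exhaustion $(E'_t\cap E_s)$ of $E_s$.'' But the existence of the improper integral over $E$ says nothing a priori about exhaustions of the admissible subset $E_s$: since $g\ge 0$, the sequence $\int_{E'_t\cap E_s} g$ certainly increases to some limit bounded by $\int_{E_s} g$, and what must be ruled out is that this limit is strictly smaller, i.e.\ that some of the mass of $g$ over $E_s$ is never captured by the sets $E'_t\cap E_s$. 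The fix stays entirely within your framework: compare the two exhaustions $(E'_t)_t$ and $(E'_t\cup E_s)_t$ of $E$ (the latter is bounded, nested, has boundary of measure zero, and unions to $E$). By the definition of the improper integral, both $\int_{E'_t} g$ and $\int_{E'_t\cup E_s} g$ converge to $\int_E g$, and their difference is exactly $\int_{E_s\setminus E'_t} g \ge \int_{E_s\setminus E'_t} h \ge 0$, which therefore tends to $0$. (Alternatively, invoke Zorich's lemma that the Jordan measures of an exhaustion of an admissible set converge to that of the set, so the proper integral of the bounded function $g$ over the residue $E_s\setminus E'_t$ vanishes.) With that substitution the rest of Step 3, and hence the whole proof, goes through.
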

Using improper integrals, we now establish the condition for the existence of the mean embedding \(\mu_p\) in \(\HS\) when \(\X\) is not necessarily an interval. Note that $\X$ can be unbounded and $K$ can be unbounded. \\
\begin{theorem}
Let $\mathcal{H}$ be a RKHS of real-valued functions defined on $\X$, with reproducing kernel $K$.
Assume that $K$ and $p$ are continuous almost everywhere and locally bounded on $\X \times \X$ and $\X$ respectively, i.e. $K$ (resp. $p$) is bounded on all bounded subsets of $\X \times \X$ (resp. $\X$). Assume that the Riemann improper integral $\int_{\X \times \X} |K(x,y)|p(x)p(y)dx \, dy$ exists. Then,
\begin{itemize}
    \item[(a)] The mean embedding $\mu_p : x \in \X \mapsto \int_\X K(x,y) p(y) dy$ exists and is in $\HS$. Moreover,  $\Vert \mu_p \Vert^2 = \iint_{\X \times \X} K(x, y) p(x) p(y) dx \, dy$.
    \item[(b)] 
    For all function $f \in \HS$, the function $fp$ admits an improper integral on $\X$ and the mapping $f \in \HS \mapsto \mathbb{E}_{X \sim p}[f(X)] = \int_\X f(x) p(x) dx$ is continuous. Furthermore, the reproducing property holds: 
    $$ \forall f \in \HS, \qquad \mathbb{E}_{X \sim p}[f(X)] = \langle f, \mu_p \rangle. $$
\end{itemize}
\label{thm:mean_embedding}
\end{theorem}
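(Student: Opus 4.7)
The plan is to apply Theorem~\ref{THM Property} to a sequence of operators $L_n \in \CCclass$ that approximates the expectation operator by Riemann sums over an exhaustion of $\X$. Concretely, I would fix an exhaustion $(E_n)_{n \in \N}$ of $\X$ by admissible sets; for each $n$, choose a partition of $E_n$ into admissible cells $(C_{i,n})_i$ of maximum diameter $\delta_n \to 0$, sample points $y_{i,n} \in C_{i,n}$, weights $\alpha_{i,n} = p(y_{i,n})|C_{i,n}|$, and constant maps $v_{i,n} \equiv y_{i,n}$. Then $L_n = \sum_i \alpha_{i,n} T_{v_{i,n}} \in \CCclass$, and by \eqref{eq:L_ntilde_expl} its representer is $\widetilde{L_n}(x) = \sum_i \alpha_{i,n} K(y_{i,n},\cdot)$, independent of $x$; moreover $L_n f = \sum_i \alpha_{i,n} f(y_{i,n})$ is itself a Riemann approximation of $\int_{E_n} f p$.

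The next step is to verify condition~(i) of Theorem~\ref{THM Property}, namely Pringsheim convergence of
\[
L_n^\ell L_m^r K(x,x) = \sum_{i,j} \alpha_{i,n} \alpha_{j,m} K(y_{i,n}, y_{j,m}).
\]
This is a Riemann sum over the admissible rectangle $E_n \times E_m$ for $\iint_{E_n \times E_m} K(y,z) p(y)p(z)\,dy\,dz$; using the local boundedness and a.e.\ continuity of $K$ and $p$, choosing the meshes $\delta_n, \delta_m$ small enough makes the discretization error vanish uniformly. Proposition~\ref{prop:comp} and the absolute-integrability hypothesis yield the improper integrability of $K p \otimes p$ on $\X \times \X$; the truncation error
\[
\Bigl| \iint_{E_n \times E_m} K p \otimes p - \iint_{\X \times \X} K p \otimes p \Bigr| \leq \iint_{(\X \times \X) \setminus (E_N \times E_N)} |K| p \otimes p
\]
whenever $\min(n,m) \geq N$ tends to $0$ as $N \to \infty$ by the Cauchy criterion for improper integrals. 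Combining the two estimates gives Pringsheim convergence to $\iint_{\X\times\X} K p \otimes p$.

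Theorem~\ref{THM Property} then produces a limit $\widetilde{L}(x) = \mu_p \in \HS$, independent of $x$. Lemma~\ref{lemma:RKHSandPointwiseConvergence} and the symmetry of $K$ give, for each $y \in \X$, $\mu_p(y) = \lim_n \sum_i \alpha_{i,n} K(y_{i,n}, y) = \int_\X K(y,z) p(z) \, dz$, proving the integral formula for $\mu_p$ in~(a); the norm identity comes from the final assertion of Theorem~\ref{THM Property}. For~(b), the same theorem gives $L_n f \to \langle f, \mu_p\rangle$, so $\int_{E_n} f p \to \langle f, \mu_p\rangle$. To upgrade to exhaustion-independence, I would use the identity $\int_E f p = \langle f, \mu_p^E\rangle$ with $\mu_p^E(x) = \int_E K(x,y) p(y) \, dy$, together with the Cauchy estimate $\|\mu_p^E - \mu_p^{E'}\| \to 0$ deduced from the Pringsheim convergence already established. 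Then $fp$ admits an improper integral equal to $\langle f, \mu_p\rangle$, and continuity of $f \mapsto \mathbb{E}_{X \sim p}[f(X)]$ follows from Cauchy--Schwarz.

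The main obstacle is the second step: establishing Pringsheim convergence of the doubly-indexed Riemann sums, where the discretization error on $E_n \times E_m$ and the tail truncation outside this rectangle must be controlled uniformly as $n$ and $m$ grow independently, rather than only along the diagonal. This is precisely where the absolute improper integrability hypothesis is essential, via the Cauchy criterion applied to the exhaustion $(E_k \times E_k)_k$ of $\X \times \X$.
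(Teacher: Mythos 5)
Your overall strategy (Riemann-sum operators in $\CCclass$, Theorem~\ref{THM Property}, then the Lo\`eve criterion) is the paper's strategy, but you collapse what the paper keeps as a two-stage limit into a single doubly-refined sequence, and this is where the argument breaks. The paper first fixes a bounded admissible set $E_s$ and lets only the partition mesh go to zero: there, condition (i) of Theorem~\ref{THM Property} is the ordinary convergence of Riemann sums of a bounded, a.e.\ continuous function on a \emph{fixed} bounded set, which Lebesgue's criterion guarantees. Only afterwards does it let $s \to \infty$, applying the Lo\`eve criterion to the family $(\widetilde{L_s})_s$ whose Gram entries $\langle \widetilde{L_s}, \widetilde{L_t}\rangle = \int_{E_s \times E_t} K p\otimes p$ are \emph{exact integrals}, so that the Pringsheim convergence in the second stage is controlled purely by the tail of $\int |K|\,p\otimes p$. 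In your version, the quantity $L_n^\ell L_m^r K(x,x)$ is a discretized sum over a rectangle $E_n \times E_m$ that grows as $n,m$ grow, and you must bound the discretization error uniformly over all pairs $n,m \geq N$. Since $K$ is only locally bounded and a.e.\ continuous, the mesh $\delta_n$ you fix for $E_n$ cannot be chosen to control the oscillation of $Kp\otimes p$ on $E_n \times E_m$ for arbitrarily large $m$: Lebesgue's criterion is not quantitative and gives no uniformity across a family of unboundedly growing domains, and the integrand could have contributions at the tag points that are negligible for the integral but not for the sum.

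You correctly identify this as ``the main obstacle,'' but the resolution you propose --- the Cauchy criterion for the improper integral over the exhaustion $(E_k \times E_k)_k$ --- only controls the \emph{truncation} error $\bigl|\iint_{E_n\times E_m} - \iint_{\X\times\X}\bigr|$, not the \emph{discretization} error of the double Riemann sum against $\iint_{E_n\times E_m}$ on the off-diagonal, growing rectangles. Absolute improper integrability says nothing about Riemann sums of $|K|p\otimes p$ over unbounded regions. The fix is exactly the paper's decomposition: index the operators by two parameters $(s,n)$, take $n\to\infty$ first with $s$ fixed (Theorem~\ref{THM Property} on a bounded set), obtain $\widetilde{L_s}=\int_{E_s}K(\cdot,y)p(y)\,dy$ and the exact identity $\langle\widetilde{L_s},\widetilde{L_t}\rangle=\int_{E_s\times E_t}Kp\otimes p$ via the reproducing property and Fubini, and only then pass to the limit in $s,t$. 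The remainder of your outline (pointwise identification of $\mu_p$, the norm identity, and part (b) including exhaustion-independence) matches the paper and would go through once this step is repaired.
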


\begin{proof}
Let $(E_s)_{s\in \N}$ be an exhaustion of $\X$. For all $s \in \N$, as $E_s$ is bounded, there exists an interval $I_s$ in $\R$ such that for all $f \in \HS$,
$$ 
\int_{E_s} f(x) p(x) dx = \int_{I_s} f(x) \mathbb{1}_{E_s}(x) p(x)dx    .
$$
Let $s \in \N$ and consider for all $n \in \N^*$, a partition $(v^s_{i,n})_{0 \leq i \leq n}$ of the interval $I_s$ composed of $n$ nodes. Suppose that the mesh of this partition $\lambda_n$ defined as $\lambda_n = \underset{1 \leq i \leq n}{\max} (v^s_{i,n} - v^s_{i-1,n})$ converges to 0 as $n$ tends to $\infty$.
Let $n \in \N^*$ and consider the following operator in $\CCclass$,
$$ 
L_{s,n} = \sum_{i=1}^n(v^s_{i,n} - v^s_{i-1,n})p(v^s_{i,n})\mathbb{1}_{E_s}(v^s_{i,n}) T_{v^s_{i,n}}    .
$$
Thus, by \eqref{eq:L_ntilde_expl}, 
$$
\forall x \in \X, \qquad \widetilde{L_{s,n}}(x) = \sum_{i=1}^n(v^s_{i,n} - v^s_{i-1,n})p(v^s_{i,n})\mathbb{1}_{E_s}(v^s_{i,n})K(v^s_{i,n}, .)    .
$$

From now on, as $L_{s,n}f(x)$ and $\widetilde{L_{s,n}}(x)$ do not depend on $x$, we will omit $x$ and simply write $L_{s,n}f$ and $\widetilde{L_{s,n}}$.\\ 
Notice that for all $n,m \in \N^*$, and all $x \in \X$,
\begin{align*} 
&L^\ell_{s,n} L^r_{s,m} K(x,x) \\
&
= \sum_{i=1}^n \sum_{j=1}^m (v^s_{i,n} - v^s_{i-1,n})p(v^s_{i,n}) (v^s_{j,m} - v^s_{j-1,m})p(v^s_{j,m})\mathbb{1}_{E_s \times E_s }(v^s_{i,n},v^s_{j,m}) K(v^s_{i,n},v^s_{j,m})  .
\end{align*}

From the assumptions on $K$ and $p$, the function $(x,y) \mapsto K(x,y) p(x)p(y)$ is continuous and 
bounded on the bounded set $E_s \times E_s$. 
Then, by \cite[\S 11.2.2, Thm 1]{Zorich} the Riemann integral  
\( \iint_{E_s \times E_s }K(x, y) p(x)p(y)dx \, dy \) exists.
As the function $x,y \mapsto K(x,y)p(x)p(y)\mathbb{1}_{E_s \times E_s}(x,y)$ is continuous almost everywhere on the interval $I_s$, the double sum above converges as $n, m$ tend  to $+\infty$ to $\int_{I_s \times I_s} K(x,y)p(x)p(y)\mathbb{1}_{E_s \times E_s}(x,y) dx \, dy $, which is equal by definition \ref{def:admissible_set} to $\int_{E_s \times E_s} K(x,y)p(x)p(y) dx \, dy $.
Then using Theorem $\ref{THM Property}$ on the sequence $(L_{s,n})_{n \in  \N}$ , we conclude that $(\widetilde{L_{s,n}})_{n \in \N}$ converges in $\HS$. Denote by $\widetilde{L_s}$ its limit. Furthermore, for all $f \in \HS$, $L_{s,n}f$ converges pointwise with respect to $n$, and we denote $L_s f = \underset{n \to \infty}{\lim} L_{s,n} f$. Finally (still by Theorem \ref{THM Property}),
\begin{equation}
\label{eq:scalarf}
\forall f \in \HS, \qquad L_s f =  \langle f, \widetilde{L_s} \rangle   .
\end{equation}

For all $x \in \X$, the function $y \mapsto K(x,y)p(y)$ is continuous almost everywhere and bounded on the bounded set $E_s$. Thus, it is Riemann integrable on $E_s$ \cite[\S 11.2.2, Thm 1]{Zorich}. Then, for all $x \in \X$,
$$
\widetilde{L_s}(x) = 
\underset{n \to \infty}{\lim} \sum_{i=1}^n(v^s_{i,n} - v^s_{i-1,n})p(v^s_{i,n})\mathbb{1}_{E_s}(v^s_{i,n})K(v^s_{i,n}, x) = \int_{E_s} K(x,y) p(y) dy    .
$$
Thus, by Lemma \ref{lemma:RKHSandPointwiseConvergence},
\begin{equation}
  \label{eq:Lntilde_def}  
\widetilde{L_s} = \int_{E_s} K(.,y) p(y) dy .
\end{equation}

From \eqref{eq:scalarf}, we obtain that for all $f \in \HS $, 
\[ 
L_sf = \underset{n \to \infty}{\lim} \sum_{i=1}^n(v^s_{i,n} - v^s_{i-1,n})p(v^s_{i,n})\mathbb{1}_{E_s}(v^s_{i,n}) f(v^s_{i,n}) = \langle f, \widetilde{L_s} \rangle .
\]
As this limit does not depend on the subdivision $(v^s_{i,n})_{0 \leq i \leq n, n \in \N}$, this implies that all functions of $\HS$ are $p$-integrable on $E_s$ and,
\[ \forall f \in \HS, \qquad \int_{E_s} f(x) p(x) dx =  \langle f, \widetilde{L_s} \rangle \]
In particular, for $t \in \N$, choosing $f = \widetilde{L_t}$ (which is in $\HS$) and using \eqref{eq:Lntilde_def}, we have
\begin{equation*} \label{eq:scalarnm}
\langle \widetilde{L_s}, \widetilde{L_t} \rangle  = \int_{E_s} \left( \int_{E_t} K(x,y)p(x) p(y) dy \right)   dx    .
\end{equation*}
By the same argument used above, the Riemann integral $\int_{E_s \times E_t}K(x,y) p(x)p(y) dx \, dy$ exists. By Fubini's Theorem for Riemann integrals \cite[\S 11.4]{Zorich}, we have
\begin{equation} \label{eq:scalarprodImproper}
\langle \widetilde{L_s}, \widetilde{L_t} \rangle  = \int_{E_s \times E_t } K(x,y)p(x)p(y) dx \, dy   .
\end{equation}

Let us show that when $s, t$ tend  to $+\infty$,
$\langle \widetilde{L_s}, \widetilde{L_t} \rangle$ converges to $\int_{\X \times \X}K(x,y) p(x)p(y) dx \, dy$. Let $s, t \in \N$,  as $x,y \mapsto |K(x,y)|p(x)p(y) $ is integrable on $\X \times \X$ and using Proposition \ref{prop:comp} with $f(x,y) =  K(x,y)p(x)p(y) \mathbb{1}_{\X \times \X \backslash E_s \times E_t }(x,y) $ and $g(x,y) = |K(x,y)|p(x)p(y)$ for all $x,y \in \X$, then $\int_{\X \times \X \backslash E_s \times E_t } K(x,y)p(x)p(y) dx dy $ and $\int_{\X \times \X \backslash E_s \times E_t } |K(x,y)|p(x)p(y) dx dy$ exist. Let us denote \( I(s,t)  = \int_{E_s \times E_t} K(x,y) p(x) p(y) \, dx \, dy
\)
and \( I  = \int_{\X \times \X} K(x,y) p(x) p(y) \, dx \, dy.
\)
We then have, 
\begin{align*}
    \left| I(s,t) - I \right| &= \left| \int_{\X \times \X \backslash E_s \times E_t }  K(x,y)  p(x)p(y) dx dy \right|  \\
    & \leq \int_{\X \times \X \backslash E_s \times E_t } \left| K(x,y) \right| p(x)p(y) dx dy  \\
    & = \int_{\X \times \X } \left| K(x,y) \right| p(x)p(y) dx dy - \int_{E_s \times E_t } \left| K(x,y) \right| p(x)p(y) dx dy  .
\end{align*}

As above, since \( E_{\min(s,t)} \times E_{\min(s,t)} \subset E_s \times E_t \), Proposition \ref{prop:comp} implies that 
\(
\int_{E_{\min(s,t)} \times E_{\min(s,t)}} |K(x,y)| p(x) p(y) \, dx \, dy
\)
exists and
\begin{align*}
\left| I(s,t) - I \right| 
&\leq \int_{\X \times \X} \left| K(x,y) \right| p(x) p(y) \, dx \, dy - \int_{E_{\min(s,t)} \times E_{\min(s,t)}} |K(x,y)| p(x) p(y) \, dx \, dy.
\end{align*}
As $\left(E_{s} \times E_{s}\right)_{s \in \N}$ is an exhaustion of $\X \times \X$, $\int_{E_{\min(s,t)} \times E_{\min(s,t)}} |K(x,y)| p(x) p(y) \, dx \, dy$ converges to $\int_{\X \times \X} |K(x,y)| p(x) p(y) \, dx \, dy $ as $s, t$ tend  to $+ \infty$. Thus, $I(s,t) = \langle \widetilde{L_s}, \widetilde{L_t} \rangle $ converges to $I = \int_{\X \times \X} K(x,y)p(x)p(y) dx \,dy$ as $s, t$ tend to $+\infty$ . \\
By applying Loève criterion \ref{loève} to the sequence \( (\widetilde{L_s})_{s \in \mathbb{N}} \), we conclude that \( (\widetilde{L_s}) \) converges in \( \HS \). Let \( \mu_p \) denote its limit. Notice that for all \( x \in \X \), we have
\[
\mu_p(x) = \lim_{s \to \infty} \int_{E_s} K(x,y) p(y) \, dy.
\]
Thus, for all $x \in \X,$ the sequence $(\int_{E_s} K(x,y) p(y) \, dy)$ converges as $s$ tends to $+ \infty $ and its limit does not depend on the exhaustion $(E_s)$. This implies that 
\( \int_{\X} K(x,y) p(y) \, dy \) is finite for all \( x \in \mathcal{X} \), and by Lemma \ref{lemma:RKHSandPointwiseConvergence}, we deduce that
\[
\mu_p = \int_{\mathcal{X}} K(.,y) p(y) \, dy.
\]
To conclude the proof of (a), recall that $I(s,t) \to I$ as $s$ and $t$ tend to $+ \infty$. This implies that $I(s,s) \to I$ when $s$ tends to $+ \infty$. As $\widetilde{L_s} $ converges in $\HS$ to $\mu_p$, we conclude that
\[
\Vert \mu_p \Vert^2=\int_{\X \times \X} K(x, y) p(x)p(y) dx \, dy.
\]
It remains to prove (b). Let $f \in \HS$. From the proof of (a), we have
$$
\int_{E_s} f(x) p(x)dx =  \langle f, \widetilde{L_s} \rangle   .
$$
As $(\widetilde{L_s})$ converges in $\HS$ to $\mu_p$ when $s$ tends to $+\infty$, we obtain that $(\int_{E_s} f(x) p(x) dx)$ converges  to $\langle f, \mu_p \rangle $ as $s$ tends to $+\infty$. As the limit does not depend on the chosen exhaustion, we conclude that $fp$ admits an improper integral on $\X$ and,
$$
\int_\X f(x)p(x)dx = 
\langle f,  \mu_p \rangle    .
$$
This implies that the mapping $f \in \HS \mapsto \int_\X f(x)p(x) dx$ is continuous. 
\end{proof}
\begin{remark}
Theorem \ref{thm:mean_embedding} contains two statements. The first one, (a), is that the function \({\mu_p : x \in \X \mapsto \int_\X K(x,y) p(y) \, dy}\) exists and belongs to \(\HS\). This result is the analogue for improper Riemann integrals of that established for Lebesgue integrals \cite[Proposition 2.3]{barp2024targeted} \cite{oates2022minimum}. Under the same condition, we further demonstrate in (b) that the reproducing property holds, with a less restrictive condition than in \cite{Mean_embedding, muandet2017kernel}, as we will show in Proposition \ref{prop:embed}. Notice that (b) is not simply deduced from (a) by continuity of the scalar product in $\HS$ since the interchange of limit and integral is not guaranteed in general.
\end{remark}

\begin{proposition} \label{prop:embed}
Let $\HS$ be a RKHS of real-valued function defined on $\X$, with reproducing kernel $K$. Assume that $K$ and $p$ are continuous almost everywhere and locally bounded on $\X \times \X$. If the standard variation \( \int_{\X} \sqrt{K(x, x)}p(x)dx \) exists, then \( \iint_{\X \times \X} |K(x, y)| p(x) p(y)dx \, dy \) is also finite.
\end{proposition}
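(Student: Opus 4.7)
The plan is to reduce the double integral bound to a product of single integrals using the pointwise Cauchy–Schwarz inequality for a positive semidefinite kernel,
$$|K(x,y)| \le \sqrt{K(x,x)}\,\sqrt{K(y,y)}, \qquad \forall x,y \in \X,$$
and then to transfer this comparison from admissible subsets to the whole $\X \times \X$ using the exhaustion machinery from Section \ref{sec:meanembedding}.

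First I would fix an exhaustion $(E_s)_{s \in \N}$ of $\X$, so that $(E_s \times E_s)_{s \in \N}$ is an exhaustion of $\X \times \X$. On each bounded admissible product $E_s \times E_s$, the assumptions on $K$ and $p$ guarantee that $(x,y) \mapsto |K(x,y)| p(x) p(y)$ is bounded and continuous almost everywhere, hence Riemann integrable by Lebesgue's criterion \cite[\S 11.2.2, Thm 1]{Zorich}; similarly $x \mapsto \sqrt{K(x,x)}\, p(x)$ is Riemann integrable on each $E_s$. Applying the kernel Cauchy–Schwarz inequality pointwise and then Fubini's theorem for Riemann integrals \cite[\S 11.4]{Zorich} gives
$$\iint_{E_s \times E_s} |K(x,y)| p(x) p(y) \, dx \, dy \;\le\; \iint_{E_s \times E_s} \sqrt{K(x,x)} \sqrt{K(y,y)}\, p(x) p(y) \, dx \, dy \;=\; \left( \int_{E_s} \sqrt{K(x,x)}\, p(x) \, dx \right)^2,$$
which in turn is bounded above by $\bigl( \int_\X \sqrt{K(x,x)}\, p(x) \, dx \bigr)^2 < \infty$ by hypothesis.

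Since the integrand is nonnegative and the sequence $(E_s \times E_s)$ is increasing, the sequence of double integrals is monotone and bounded above, hence convergent. To conclude that the improper integral over $\X \times \X$ itself exists (with a value independent of the exhaustion), I would invoke Proposition \ref{prop:comp} with $f(x,y) = K(x,y) p(x) p(y)$ and $g(x,y) = \sqrt{K(x,x)}\sqrt{K(y,y)}\, p(x) p(y)$: the improper integral of $g$ over $\X \times \X$ exists (by the same Fubini argument applied to an arbitrary exhaustion, which is always trapped between two product exhaustions $E_{s(t)} \times E_{s(t)}$, yielding a uniform bound), so the comparison test provides both existence and finiteness of $\iint_{\X \times \X} |K(x,y)| p(x) p(y) \, dx \, dy$. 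The one delicate point is the exhaustion-independence for a general (non-product) exhaustion of $\X \times \X$; this is handled by the standard trick of squeezing any admissible set between two products $E_{s} \times E_{s}$, after which the monotone bound transfers verbatim.
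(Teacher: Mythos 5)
Your proposal follows essentially the same route as the paper's proof: the pointwise Cauchy--Schwarz bound $|K(x,y)| \leq \sqrt{K(x,x)}\sqrt{K(y,y)}$ coming from $K(x,y) = \langle K(x,\cdot), K(y,\cdot)\rangle_{\HS}$, followed by Fubini's theorem on the product exhaustion $E_s \times E_s$ to factor the dominating integral as $\bigl(\int_{E_s}\sqrt{K(x,x)}\,p(x)\,dx\bigr)^2$. The only difference is that you spell out the exhaustion-independence step (monotonicity plus nonnegativity, cf.\ \cite[\S 11.6, Prop.~1]{Zorich}) that the paper leaves implicit; this is a welcome clarification but not a different argument.
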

\begin{proof} 
Let \( (E_n) \) be an exhaustion of \(\X\). 
Recall that
$K(x,y) = \langle K(x, .), K(y, .) \rangle_\HS$ for all $x, y \in \X$.
Using Cauchy-Schwarz inequality, we then obtain: 
\[
\int_{E_n \times E_n} |K(x, y)| p(x) p(y) \, dx \, dy \leq \int_{E_n \times E_n} \sqrt{K(x, x)} p(x) \sqrt{K(y, y)} p(y) \, dx \, dy.
\]
Since \( E_n \times E_n \) is an admissible subset of \( \X \times \X \), applying Fubini's theorem for Riemann integrals (cf. \cite[\S 11.4]{Zorich}) gives:
\begin{align*}
    \int_{E_n \times E_n} \sqrt{K(x, x)} p(x) \sqrt{K(y, y)} p(y) \, dx \, dy &= \int_{E_n} \int_{E_n} \sqrt{K(x, x)} p(x) \sqrt{K(y, y)} p(y) \, dx \, dy \\
    &= \left( \int_{E_n} \sqrt{K(x, x)} p(x) \, dx \right)^2.
\end{align*}

Thus, if \( \int_{\X} \sqrt{K(x, x)} p(x) \, dx \) is finite, it follows that \( \iint_{\X \times \X} |K(x, y)| p(x) p(y) \, dx \, dy \) is also finite. 
\end{proof}
The minimal requirement for the existence of the mean embedding $\mu_p$ is that  \( \iint_{\X \times \X} |K(x, y)| p(x) p(y)dx \, dy \) is finite. This is weaker than the criterion found in the literature that requires \( \int_{\X} \sqrt{K(x, x)}p(x)dx \) to be finite, as shown by the previous proposition. 
Note, however, that the (almost sure) continuity of the kernel is also required in Theorem~\ref{thm:mean_embedding}, which is a mild assumption.
Finally, the following example exhibits a kernel $K$ for which \( \int_{\X} \sqrt{K(x, x)}p(x)dx \) is not finite but such that \( \iint_{\X \times \X} |K(x, y)| p(x) p(y) dx \, dy\) is finite.
\newline
\begin{example}[Kernel with Non-Finite Standard Deviation and Well-Defined Mean Embedding]
Let \( p : x \mapsto \frac{1}{x^2}\mathbb{1}_{x \geq 1} \) and let us consider the following function:
\[
K: [1,\infty) \times [1,\infty) \to \mathbb{R}, \quad K(x, y) = xy e^{-\left(x - y\right)^2}.
\]
It is a positive semi-definite kernel as it is the covariance function of the process $xZ(x)$ where $Z$ is a stationary process with mean zero and Gaussian covariance function $\exp(-(x-y)^2)$.
The standard deviation is clearly infinite:
\[
\mathbb{E}_{X \sim p}[\sqrt{K(X, X)}]  = \int_1^{+\infty} \frac{1}{x} \, dx = +\infty.
\]
Let us now show that  \(
I := \int_{[1,\infty)^2} \vert K(x, y) \vert p(x) p(y) dx \, dy \) exists. \\
As the function $x,y \mapsto K(x, y) p(x) p(y)$ is positive, it is enough to prove that for only one specific exhaustion $(E_n)$ of the set $[1,\infty)$, the sequence \( I_n := \iint_{E_n ^2} K(x, y) p(x) p(y) dx \, dy\) converges to $I$.
Indeed, the positivity guarantees that the result will then be valid for all exhaustions \cite[\S 11.6, Prop. 1]{Zorich}.
Here, we choose  \( E_n = [ 1, n] \).\\ 
To compute $I_n$, let us split the cubic domain $E_n \times E_n$ into two triangles:
\begin{align*}
     I_n 
     &= \int_1^n \int_1^n \frac{1}{xy} e^{-\left( x - y \right)^2} \, dx dy 
     &= \int_1^n \int_y^n \frac{1}{xy} e^{-\left( x - y \right)^2} \, dx dy +  \int_1^n \int_1^y \frac{1}{xy} e^{-\left( x - y \right)^2} \, dx dy    .
\end{align*}
Using the symmetry of the integrand, we can see that the two terms are equal, and thus
\begin{align*}
     I_n = 2 \int_1^n \int_y^n \frac{1}{xy} e^{-\left( x - y \right)^2} \, dx dy  .
\end{align*}
Now let us apply the change of variables
\(
T : (x,y) \in \mathbb{R}^2 \mapsto (w = x - y, y)  
\).
As 
for all \( x, y \in \R \),
\[
1 \leq y \leq n  \quad \text{and} \quad  y \leq x \leq n \iff 0 \leq w \leq n - 1  \quad \text{and} \quad 
1 \leq y \leq n - w  ,
\]
we have
\begin{equation*}
   I_n = 2 \int_{0}^{n-1} \int_{1}^{n-w} \frac{1}{y(w + y)} e^{-w^2} \, dy  dw   .
\end{equation*}
Now, for all $w \in [0, n-1]$, we have
$$ 
0 \leq \int_{1}^{n-w} \frac{1}{y(w + y)} \, dy
\leq \int_{1}^{n-w} \frac{1}{y^2} \, dy 
= \left[ - \frac{1}{y} \right]_1^{n-w} 
= 1 - \frac{1}{n - w} \leq 1.
$$
Therefore,
$$ 
I_n \leq 2\int_0^{n-1}  e^{-w^2} \, dw \leq 2 \int_0^{+\infty} e^{-w^2} dw =\sqrt{\pi}.
$$
Finally, the sequence $(I_n)_{n \geq 1}$, which is monotonically increasing, 
is bounded. Hence it converges as \( n \to + \infty \). 
Therefore, 
\(
\iint_{[1, \infty)^2} \vert K(x, y) \vert p(x) p(y) dx \, dy
\)
is finite, even though \( \mathbb{E}_{X \sim p}[ \sqrt{K(X, X)}] \) is not finite.
\end{example}
\section*{Acknowledgement}
This research has been done in the frame of the Chair PILearnWater, part of the AI Cluster ANITI, funded by the French National Research Agency.

\bibliographystyle{ieeetr}
\bibliography{biblio}

\end{document}